\newtheorem{theorem}{Theorem}[section]
\newtheorem{corollary}[theorem]{Corollary}
\newtheorem{proposition}[theorem]{Proposition}
\theoremstyle{definition}
\newtheorem*{definition*}{Definition}
\newtheorem*{corollary*}{Corollary}
\newtheorem*{lemma*}{Lemma}
\newtheorem*{remark*}{Remark}
\definecolor{dgreen}{rgb}{0.13,0.7,.63}
\def\daniele#1 {\fbox {\footnote {\ }}\ \footnotetext { From Daniele: {\color{blue}#1}}}
\def\steven#1 {\fbox {\footnote {\ }}\ \footnotetext { From G\"{o}khan: {\color{red}#1}}}
\title{The Diophantine Equation $(x+1)^k+(x+2)^k+\cdots+(\ell x)^k=y^n$ Revisited}
\author{Daniele Bartoli}
\address{Department of Mathematics and Informatics, University of Perugia, Perugia,  Italy}
\email {daniele.bartoli@unipg.it}
\author{G\"{o}khan Soydan}
\address{Department of Mathematics, Bursa Uluda\u{g} University, 16059, Bursa, TURKEY }
\email {gsoydan@uludag.edu.tr}
\date{\today}
\begin{document}

\maketitle

\begin{abstract}
Let $k,\ell\geq2$ be fixed integers and $C$ be an effectively computable constant depending only on $k$ and $\ell$. In this paper, we prove that all solutions of the equation $(x+1)^{k}+(x+2)^{k}+...+(\ell x)^{k}=y^{n}$ in integers $x,y,n$ with $x,y\geq1, n\geq2, k\neq3$ and $\ell\equiv 1 \pmod 2$ satisfy $\max\{x,y,n\}<C$. The case when $\ell$ is even has already been completed by Soydan (Publ. Math. Debrecen 91 (2017), pp. 369-382).
\end{abstract}
\section{Introduction}
Let $\mathbb{Z}$ and $\mathbb{N}$ be the sets of integers and positive integers, respectively. Many authors studied the Diophantine equation
\begin{equation*}
	1^{k}+2^{k}+...+x^{k}=y^{n}, \quad x,y\in\mathbb{Z},\quad k, n\geq 2
\end{equation*}
(see, e.g., \cite{BGP}, \cite{BHMP}, \cite{Br}, \cite{GTV}, \cite{H}, \cite{JPW}, \cite{Lu}, \cite{Pi1}, \cite{Pi2}, \cite{Sch}, \cite{Ur}, \cite{VGT}). 

A more general case is to consider the Diophantine equation
\begin{equation}  \label{eq.1.1}
	(x+1)^k+(x+2)^k+ ... + (x+r)^k=y^{n} \quad x,y\in\mathbb{Z},\quad k, n\geq 2.
\end{equation}
In 2013, Zhang and Bai \cite{ZB} solved equation \eqref{eq.1.1} with $k=2$ and $r=x$. In 2014, the equation
\begin{equation}\label{eq.1.2}
	(x-1)^k+x^k+(x+1)^k=y^{n}  \quad x, y, n \in\mathbb{Z},  \quad n \geq 2,
\end{equation}
was solved completely by Zhang \cite{Zh1} for $k=2, 3, 4$ (Actually, firstly, Cassels considered equation \eqref{eq.1.2} in 1985, and he proved that $x=0,1,2,24$ are the only integer solutions to this equation for $k=3$ and $n=2$). In the next year, Bennett, Patel and Siksek \cite{BPS}  extended Zhang's result, completely solving equation \eqref{eq.1.2} in the cases $k=5$ and $k= 6$. In 2016, Bennett, Patel and Siksek \cite{BPS2} considered equation \eqref{eq.1.1}. They gave the integral solutions to equation \eqref{eq.1.1} using linear forms in logarithms, sieving, and Frey curves when $k=3$, $2\leq r \leq 50,$ $x\geq1$, and $n$ is prime.\

Let $k\geq2$ be even and $r$ be a non-zero integer. In 2017, Patel and Siksek \cite{PS} showed that
for almost all $d\geq2$ (in the sense of natural density), the equation
\begin{equation*}
	x^k +(x+r)^k +...+(x+(d-1)r)^k =y^{n}, \quad x, y, n \in\mathbb{Z},  \quad n \geq 2,
\end{equation*}
has no solutions. Recently, a generalization of equation \eqref{eq.1.2} was also considered by some authors. Zhang \cite{Zh2}, Koutsianas and Patel (see \cite{Kou}, \cite{KV}) studied the integer solutions of the equation
\begin{equation}\label{eq.1.4}
	(x-d)^{k}+x^{k}+(x+d)^{k}=y^n,\quad x,y\in \mathbb{Z},\quad n\geq2,
\end{equation}
for the cases $k=4$ and $k=2$.
Zhang gave some results on equation \eqref{eq.1.4} with $k=4$ by using modular approach. Koutsianas also used modular approach and he proved that equation \eqref{eq.1.4} has no solutions when $d=p^{b}$, $k=2$, and $n\geq 7$  for $p\in\{7,11,13,17,19,23,31,37,41,43,47\}$ and $b\geq 0$ unless $n$ is in Table 1 in his paper \cite{Kou}.

More recently, Garcia and Patel \cite{GP} complemented the work of Cassels, Koutsianas and Zhang by considering the case when $k=3$ and showing that equation \eqref{eq.1.4} with $n\ge5$ a prime and $0<d \le 10^6$ has only trivial solutions $(x,y,n)$ which satisfy $xy=0$. Then, Kundu and Patel \cite{KP} determined all primitive solutions to the equation $(x+r)^2+(x+2r)^2+\cdots+(x+dr)^2=y^{n}$ for $ 2\le d\le 10$ and for $1\le r\le 10^4$ (except the case $d=6$, where they considered only $1 \le r \le 5000$). 

Let $k,\ell$ be fixed integers such that $k\ge 1$, $\ell>0$ and $\ell$ even. In 2017, Soydan \cite{S} considered the equation
\begin{equation}\label{eq.1.3}
(x+1)^k+(x+2)^k+...+(\ell x)^k =y^{n}, \quad x, y,n\in\mathbb{Z}.
\end{equation}
He proved that it has only finitely many solutions where $x,y\ge 1$, $k\neq 1,3$ and $n\ge 2$. He also showed that equation \eqref{eq.1.3} has infinitely many solutions with $k=1,3$ and $n\ge 2$. In the next year, B\'{e}rczes, Pink, Savas and Soydan \cite{BPSS} considered equation \eqref{eq.1.3} with $\ell=2$. They proved that it has no solutions if $2\leq x\leq 13$, $y\geq 2$, and $n\geq 3$.

In this work, we reconsider the Diophantine equation
\begin{equation}\label{xx}
(x+1)^{k}+(x+2)^{k}+...+(\ell x)^{k}=y^n
\end{equation}
in integers $x, y\ge 1$ and $n\ge 2$.

\section{Preliminaries}
We shall use the following important results of Schinzel-Tijdeman \cite{ST} and Brindza \cite{Br2} on the superelliptic equation
\begin{equation}\label{1.1}
f(x)=y^{n}.
\end{equation}

\begin{theorem}[Schinzel-Tijdeman, \cite{ST}]\label{theo.1}
Let $f(x)\in\mathbb{Q}[x]$ be a polynomial having at least 2 distinct roots. Then there exists an effective constant $N(f)$ such that any solution of \eqref{1.1} in $x,n\in\mathbb{Z}$, $y\in\mathbb{Q}$ satisfies $n\leq N(f)$.
\end{theorem}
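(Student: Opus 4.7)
The plan is to follow the classical route of Schinzel and Tijdeman, whose two pillars are the arithmetic of the splitting field of $f$ and Baker's theorem on linear forms in logarithms. After clearing denominators, one may assume $f\in\mathbb{Z}[x]$; moreover, the substitution $w=a_{d}^{n-1}y$, where $a_{d}$ is the leading coefficient of $f$, reduces the problem to a monic polynomial $\tilde f\in\mathbb{Z}[x]$ with the same set of distinct roots. Let $K$ be the splitting field of $\tilde f$ and factor
\[
\tilde f(x)=\prod_{i=1}^{s}(x-\alpha_{i})^{e_{i}}, \qquad \alpha_{1},\ldots,\alpha_{s}\in\mathcal{O}_{K}\ \text{distinct},\ s\ge 2.
\]

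The next step is to exploit the pairwise coprimality of the factors $(x-\alpha_{i})$ in $\mathcal{O}_{K}$ away from a finite set of primes. Any prime ideal $\mathfrak{p}$ of $\mathcal{O}_{K}$ dividing both $(x-\alpha_{i})$ and $(x-\alpha_{j})$ must divide $\alpha_{j}-\alpha_{i}$, and hence lies in a finite set $S$ depending only on the discriminant of $\tilde f$. Comparing $\mathfrak{p}$-adic valuations in $w^{n}=\tilde f(x)$ for $\mathfrak{p}\notin S$ shows that each ideal $(x-\alpha_{i})\mathcal{O}_{K,S}$ is the $m_{i}$-th power of an $S$-ideal, where $m_{i}=n/\gcd(n,e_{i})$. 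Since the $S$-class group and the quotient $\mathcal{O}_{K,S}^{\times}/(\mathcal{O}_{K,S}^{\times})^{n}$ are finite with effectively computable representatives, there exist elements $\gamma_{i}$, drawn from an effectively determined finite set $\Gamma_{i}\subset K^{\times}$, and $\xi_{i}\in K^{\times}$, such that
\[
x-\alpha_{i}=\gamma_{i}\xi_{i}^{m_{i}}, \qquad i=1,\ldots,s.
\]

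Eliminating $x$ between two indices $i\neq j$ yields
\[
\gamma_{i}\xi_{i}^{m_{i}}-\gamma_{j}\xi_{j}^{m_{j}}=\alpha_{j}-\alpha_{i}\neq 0,
\]
which after normalisation becomes a binary $S$-unit equation whose exponents tend to infinity with $n$. Baker's theorem on linear forms in logarithms, applied to
\[
\Lambda=m_{i}\log\xi_{i}-m_{j}\log\xi_{j}+\log(\gamma_{i}/\gamma_{j}),
\]
produces an effective lower bound for $|\Lambda|$ that depends polynomially on $\log\max(m_{i},m_{j})$; on the other hand, the displayed relation forces $|\Lambda|$ to decay at least exponentially in $\max(m_{i},m_{j})$. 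Comparing these two bounds gives $\max(m_{i},m_{j})\le N(\tilde f)$, and hence $n\le N(f)$ with $N(f)$ effectively computable.

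The main obstacle is the passage from the ideal-level information to the element-level equation $x-\alpha_{i}=\gamma_{i}\xi_{i}^{m_{i}}$: it requires effective versions of the finiteness of the class group and of Dirichlet's unit theorem for $K$, and one must also verify that the resulting linear form $\Lambda$ is nonzero so that Baker's lower bound actually applies. Repeated roots ($e_{i}>1$) introduce a mild case distinction according to the value of $\gcd(n,e_{i})$, but each case produces an $S$-unit equation of the same form and is handled uniformly.
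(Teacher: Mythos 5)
The paper offers no proof of this statement: it is imported verbatim from \cite{ST} as a known result, so there is nothing internal to compare against, and your sketch must be judged on its own. Your overall strategy is indeed the one Schinzel and Tijdeman use (reduction to a monic integral polynomial, near-coprimality of the ideals $(x-\alpha_i)$ outside a fixed set $S$, extraction of $x-\alpha_i=\gamma_i\xi_i^{m_i}$, and Baker's theorem), but the final comparison, as you have set it up, does not close.

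The crux is your claim that the $\gamma_i$ come from ``an effectively determined finite set'' and that Baker's lower bound then depends only polynomially on $\log\max(m_i,m_j)$. The set of coset representatives of $\mathcal{O}_{K,S}^{\times}/(\mathcal{O}_{K,S}^{\times})^{n}$ cannot be chosen independently of $n$: writing units as $\zeta\prod_j u_j^{b_j}$ with $0\le b_j<n$, a typical coset has no representative of height smaller than a constant times $n$, so the base $\gamma_i/\gamma_j$ of your linear form has logarithmic height of order $n$. Since also $h(\xi_i)\le \log|x|/m_i+O(1)$, Baker yields only $-\log|\Lambda|\le C\,n\cdot(\log|x|/n+1)\cdot\log n\le C'(\log|x|+n)\log n$, while the equation yields $-\log|\Lambda|\ge\log|x|-O(1)$; because $\log|x|\ge cn$ in any case (from $2^{n}\le|y|^{n}\le C|x|^{\deg f}$), these two inequalities are compatible for every $n$ and no bound follows. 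The actual argument avoids this by \emph{not} collapsing the unit part into a single algebraic number: one keeps
\[
\Lambda=\log\delta+\sum_j b_j\log u_j+m_i\log\xi_i-m_j\log\xi_j+2k\pi i
\]
as a linear form in logarithms of finitely many \emph{fixed} numbers of height $O(1)$ together with $\log\xi_i,\log\xi_j$, the large integers $b_j<n$ entering only through the factor $\log B=\log n$ in Baker's estimate; this gives $-\log|\Lambda|\le C(\log|x|/n+1)\log n$, which against $\log|x|-O(1)$ and $\log|x|\ge cn$ does bound $n$. A second, smaller gap: the exponential decay of $|\Lambda|$ in $n$ that you assert rests on the lower bound $\log|x|\ge cn$, which requires $|y|\ge2$; for $y\in\{0,\pm1\}$ no bound on $n$ exists at all (e.g.\ $f(x)=x^{2}-x+1$ has two distinct roots and $f(1)=1=1^{n}$ for every $n$), so this degenerate case must be excluded explicitly --- as it is in the original statement of \cite{ST}, though not in the version quoted in this paper.
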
 

\begin{corollary}[Schinzel-Tijdeman, \cite{ST}]\label{cor.1}
Let $f(x)\in\mathbb{Q}[x]$ be a polynomial having at least 3 simple roots. Then \eqref{1.1} has at most finitely many solutions in $x,n\in\mathbb{Z}$, $y\in\mathbb{Q}$ satisfying $n > 1$. If $f(x)$ has 2 simple roots then \eqref{1.1} has only finitely many solutions with $n>2$. In both cases the solutions can be explicitly determined.
\end{corollary}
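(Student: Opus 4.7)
I would deduce Corollary~\ref{cor.1} from Theorem~\ref{theo.1} by descending to a Thue--Mahler or $S$-unit equation and invoking Baker's effective bounds on linear forms in logarithms. Since $f$ has at least two simple roots, in particular at least two distinct roots, Theorem~\ref{theo.1} already supplies an effective bound $n \le N(f)$. It therefore remains to show that for each fixed $n$ in this range (with $n \ge 3$ in the two-simple-roots case) the superelliptic equation $y^{n} = f(x)$ has finitely many, effectively computable, solutions.

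Fix such an $n$ and factor $f(x) = c\prod_{i=1}^{s}(x - \alpha_i)^{e_i}$ over $\overline{\mathbb{Q}}$. Writing $e_i = nq_i + r_i$ with $0 \le r_i < n$ and substituting $y = w\prod_i(x-\alpha_i)^{q_i}$ reduces the equation to $w^{n} = c\prod_i(x-\alpha_i)^{r_i}$, in which every simple root of $f$ still appears with $r_i = 1$. Let $K = \mathbb{Q}(\alpha_1,\ldots,\alpha_s)$ and let $S$ be a finite set of places of $K$ containing the archimedean ones and all primes dividing $nc\,\mathrm{disc}(f)$, chosen large enough that $\mathcal{O}_{K,S}$ is a principal ideal domain. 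Comparing $S$-adic valuations in the reduced equation then forces, for every solution,
\begin{equation*}
x - \alpha_i \;=\; \beta_i\gamma_i^{n}, \qquad \gamma_i \in \mathcal{O}_{K,S},
\end{equation*}
with $\beta_i$ ranging over a finite set of representatives depending only on $f$, $n$ and $S$.

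Picking three (respectively two) simple roots $\alpha_i$ and taking the identities $\beta_i\gamma_i^{n} - \beta_j\gamma_j^{n} = \alpha_j - \alpha_i$, a suitable ratio produces an $S$-unit equation $u + v = 1$ in $\mathcal{O}_{K,S}^{*}$ (respectively a Thue equation of degree $n \ge 3$), whose solutions are effectively bounded by Baker's theorem on linear forms in complex and $p$-adic logarithms. This bounds each $\gamma_i$, and hence $x$ and $y$. The exclusion of $n = 2$ in the two-simple-roots case is forced because the single identity $\beta_1\gamma_1^{2} - \beta_2\gamma_2^{2} = \alpha_2 - \alpha_1$ is Pell-type and can admit infinitely many solutions. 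The main obstacle is precisely this last step: converting the $S$-unit or Thue equation into an explicit effective bound on $\max\{|x|, H(y)\}$ requires the full strength of Baker's machinery, whereas everything preceding it is routine descent and $S$-unit bookkeeping.
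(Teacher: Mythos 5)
The paper offers no proof of Corollary~\ref{cor.1}: it is quoted from Schinzel and Tijdeman \cite{ST} and used as a black box, so there is no internal argument to compare yours against. Your outline is, in substance, the classical proof from the literature: Theorem~\ref{theo.1} bounds $n$ effectively (two simple roots are in particular two distinct roots), and for each fixed admissible $n$ one performs the LeVeque-type descent to $x-\alpha_i=\beta_i\gamma_i^{n}$ and invokes Baker's effective theory of Thue and $S$-unit equations. That is the right strategy and it does prove the statement.

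Two points in your sketch are glossed over and deserve repair. First, in the case of three simple roots with $n=2$ (the only case where the Thue route fails, since a binary quadratic form does not give a Thue equation), the raw identities $\beta_i\gamma_i^{2}-\beta_j\gamma_j^{2}=\alpha_j-\alpha_i$ do not directly yield a unit equation: their pairwise ratios are fixed constants, not equations in unknown units. One must first pass to $K'=K(\sqrt{\beta_1},\sqrt{\beta_2},\sqrt{\beta_3})$, factor each difference as $(\sqrt{\beta_i}\gamma_i-\sqrt{\beta_j}\gamma_j)(\sqrt{\beta_i}\gamma_i+\sqrt{\beta_j}\gamma_j)$, deduce that each linear form divides a fixed nonzero constant in $\mathcal{O}_{K',S'}$ and is therefore an $S'$-unit up to a factor from a finite set, and only then apply the Siegel identity $(\sqrt{\beta_1}\gamma_1-\sqrt{\beta_2}\gamma_2)+(\sqrt{\beta_2}\gamma_2-\sqrt{\beta_3}\gamma_3)=\sqrt{\beta_1}\gamma_1-\sqrt{\beta_3}\gamma_3$ and divide through to reach $u+v=1$. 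Second, the corollary allows $y\in\mathbb{Q}$ rather than $y\in\mathbb{Z}$; before comparing $S$-adic valuations you should note that $y^{n}=f(x)$ with $x\in\mathbb{Z}$ forces the denominator of $y$ to divide a quantity depending only on $f$, so that $y$ is an $S$-integer for a set $S$ fixed in advance. With these standard adjustments your argument is correct and is essentially the proof given in \cite{ST} combined with Baker's bounds.
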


\begin{theorem}[Brindza, \cite{Br2}]\label{theo.2}
	Let $H(x)\in\mathbb{Q}[x]$,
	\begin{align*}
	H(x)=a_{0}x^{N}+...+a_{N}=a_{0}\prod_{i=1}^m (x-\alpha_{i})^{r_{i}},
	\end{align*}
	with $a_{0}\neq0$ and $\alpha_{i}\neq\alpha_{j}$ for $i\neq j$. Let $0\neq b\in\mathbb{Z}$, $2\leq n\in\mathbb{Z}$ and define $t_{i}=\frac{n}{(n,r_{i})}$. Suppose that $\{t_{1},...,t_{m}\}$ is not a permutation of the m-tuples $(a)$ $\{t,1,...,1\}$, $t\geq1$; $(b)$ $\{2,2,1,...,1\}.$
	
	Then all solutions $(x,y)\in\mathbb{Z}^{2}$ of the equation
	\begin{align*}
	H(x)=by^{n}
	\end{align*}
	satisfy  $\max\{|x|,|y|\}<C$, where $C$ is an effectively computable constant depending only on $H$, $b$, and $n$.
\end{theorem}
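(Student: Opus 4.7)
The plan is to reduce $H(x)=by^n$ to an $S$-unit equation over a number field and then invoke Baker's effective theorem on linear forms in logarithms. Let $K=\mathbb{Q}(\alpha_1,\ldots,\alpha_m)$ with ring of integers $\mathcal{O}_K$, and let $S$ be the finite set of prime ideals of $K$ lying over rational primes dividing $a_0 b n$, together with those dividing any difference $\alpha_i-\alpha_j$. Outside $S$ the ideals $(x-\alpha_i)$ and $(x-\alpha_j)$ are pairwise coprime; combined with the factorization
\[
a_0\prod_{i=1}^{m}(x-\alpha_i)^{r_i}=by^n
\]
and the fact that $t_i=n/(n,r_i)$ is precisely the obstruction to $(x-\alpha_i)^{r_i}$ being an $n$-th power when $(x-\alpha_i)$ is not, this forces $v_{\mathfrak{p}}(x-\alpha_i)\equiv 0\pmod{t_i}$ for every $\mathfrak{p}\notin S$ and every $i$.

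Next I would apply effective forms of the finiteness of the $S$-class group of $K$ and of Dirichlet's $S$-unit theorem to factor
\[
x-\alpha_i=\gamma_i\,\delta_i^{t_i},
\]
with $\gamma_i$ ranging over an explicit finite set $\Gamma_i\subset K^{\times}$ whose elements have heights bounded effectively in terms of $H$, $b$, $n$ only, and with $\delta_i\in K^{\times}$. For any pair $i\neq j$ one then has
\[
\gamma_i\,\delta_i^{t_i}-\gamma_j\,\delta_j^{t_j}=\alpha_j-\alpha_i,
\]
and for any triple $(i,j,k)$ a linear relation among the three algebraic powers $\gamma_i\delta_i^{t_i}$, $\gamma_j\delta_j^{t_j}$, $\gamma_k\delta_k^{t_k}$ with fixed coefficients.

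The core of the argument is a combinatorial case analysis on the multiset $\{t_1,\ldots,t_m\}$. I would show that whenever three indices can be selected so that the associated three-term relation is nondegenerate and at least one of the exponents $t_i$ involved exceeds $2$ (or appropriate substitutes in mixed cases), the relation becomes an $S$-unit equation in three variables, or equivalently a Thue--Mahler equation of degree $\geq 3$, to which Baker's effective theorem applies and produces an explicit upper bound on the heights of the $\delta_i$, hence on $|x|$ and $|y|$. A direct enumeration shows that the profiles of $(t_1,\ldots,t_m)$ for which no such triple can be produced are precisely the permutations of $\{t,1,\ldots,1\}$ and $\{2,2,1,\ldots,1\}$, which are exactly the excluded patterns in the statement.

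The main obstacle will be the combinatorial case analysis and the careful propagation of effective constants through the chain: effective $S$-unit theorem $\to$ height of the $\gamma_i$ $\to$ Baker's bound on the $\delta_i$ $\to$ bound on $\max\{|x|,|y|\}$. The sharpness of the hypothesis is explained by the two excluded profiles: a single multiplicity $t$ (with all others equal to $1$) reduces $H(x)=by^n$ to a binomial superelliptic equation which, depending on $(t,n)$, can have infinitely many integer points; the pattern $\{2,2,1,\ldots,1\}$ reduces after rescaling to a Pell-type equation, again potentially of infinite type. Outside these profiles one always lands on a curve of genus $\geq 2$, or more precisely on a genuine $S$-unit equation, where Baker's method delivers the effective constant $C$ depending only on $H$, $b$, and $n$.
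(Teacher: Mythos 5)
First, a framing remark: the paper does not prove this statement at all. It is quoted as Brindza's theorem from \cite{Br2} and used as a black box, so there is no proof in the paper to compare against; the only meaningful comparison is with Brindza's original argument. Your general strategy is indeed that argument's strategy (the LeVeque--Baker line): factor $x-\alpha_i$ into ideals in the splitting field, use finiteness of the class group and the $S$-unit theorem to write $x-\alpha_i=\gamma_i\delta_i^{t_i}$ with $\gamma_i$ in an effectively bounded finite set, and then reduce to equations that Baker's method solves effectively. Up to that point your outline is sound.

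There is, however, a genuine gap in the combinatorial heart of your proposal. Your criterion for producing a usable relation requires ``at least one of the exponents $t_i$ involved exceeds $2$,'' and you then claim that the profiles admitting no such configuration are exactly the two excluded patterns. That enumeration is inconsistent with your own criterion: the profile $\{2,2,2,1,\ldots,1\}$ (more generally, any profile with three or more $t_i$ equal to $2$ and none larger) is \emph{not} excluded by the theorem, yet it contains no exponent exceeding $2$, so your stated mechanism produces nothing for it. This is not a ``mixed case'' to be handled by an unspecified substitute; it is precisely the case that explains why the hypothesis tolerates two, but not three, values $t_i=2$. The standard way to treat it: take three indices with $t_i=t_j=t_k=2$, set $u=\gamma_i\delta_i^2$, so that $\gamma_j\delta_j^2=u-(\alpha_j-\alpha_i)$ and $\gamma_k\delta_k^2=u-(\alpha_k-\alpha_i)$, and multiply the three relations to get
\[
u\bigl(u-(\alpha_j-\alpha_i)\bigr)\bigl(u-(\alpha_k-\alpha_i)\bigr)=\gamma_i\gamma_j\gamma_k\,(\delta_i\delta_j\delta_k)^2,
\]
a hyperelliptic equation $g(u)=cw^2$ in which the cubic $g$ has three distinct roots; Baker's effective theorem (in its $S$-integral, number-field form, due to Trelina and Kotov--Sprind\v{z}uk, which is what Brindza actually invokes) then bounds $u$, hence $x$. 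A second, related inaccuracy: you assert the pair/triple relations become ``$S$-unit equations in three variables,'' but such equations are \emph{not} effectively solvable by Baker's method --- only two-term $S$-unit equations $a\varepsilon+b\eta=c$ are; three-term ones fall under the ineffective Subspace Theorem. The effective reductions must instead pass through Thue--Mahler equations or superelliptic/hyperelliptic equations: for the pattern with some $t_j\geq 3$ and another $t_i\geq 2$, the pair relation $\gamma_i\delta_i^{t_i}-\gamma_j\delta_j^{t_j}=\alpha_j-\alpha_i$ is read as $f(\delta_j)=\gamma_i\delta_i^{t_i}$ with $f(X)=\gamma_jX^{t_j}+(\alpha_j-\alpha_i)$ having $t_j\geq 3$ simple roots, which is exactly the superelliptic situation Baker covers. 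With that correction and the all-$2$'s case supplied, your outline becomes a faithful reconstruction of Brindza's proof.
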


\section{Main results}
Consider $H(x)=(x+1)^{k}+(x+2)^{k}+\cdots +(\ell x)^{k}$. By \cite[Formula 2.3]{Rd}, $H(x)=B_{k+1}(\ell x+1)-B_{k+1}(x+1)$, where 
\begin{align*}
	B_{q}(x)=x^{q}-\frac{1}{2}qx^{q-1}+\dfrac{1}{6}\binom {q} {2}x^{q-2}+\cdots =\sum\limits_{i=0}^q\binom {q} {i}x^{q-i}B_{i}
	\end{align*}
	is the $q$-th Bernoulli polynomial with $q=k+1$; see \cite[formula 2.71]{Rd}.
Therefore,
$$H(x)=\sum _{i=0}^{k+1}\binom{k+1}{i} (\ell x+1)^{k+1-i}B_i  -\sum _{i=0}^{k+1}\binom{k+1}{i} (x+1)^{k+1-i}B_i =$$
$$(\ell^{k+1}-1)x^{k+1}+\frac{(k+1)}{2}(\ell^{k}-1)x^k+\frac{(k+1)k}{12}(\ell^{k-1}-1)x^{k-1}+\cdots.$$

Note that $H(0)=0$ and the multiplicity of $0$ as root of $H(x)$ is $1$ if $k+1$ is odd and $2$ if $k+1$ is even.

\begin{proposition}
\label{NumberOfRoots}
The polynomial $H(x)$ has at least three distinct roots.
\end{proposition}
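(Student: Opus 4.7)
The plan is to argue by contradiction. Suppose $H$ has at most two distinct roots. The coefficient of $x^{k}$ in $H$ is $\tfrac{k+1}{2}(\ell^k-1)$, which is nonzero for $\ell\geq 2$, so $H$ is not a scalar multiple of $x^{k+1}$. Combined with the multiplicity $a\in\{1,2\}$ of $0$ as a root (noted just above the proposition) and the observation that any non-real root would contribute a distinct complex conjugate root, this forces
\[
H(x) \;=\; (\ell^{k+1}-1)\,x^{a}(x-\alpha)^{k+1-a}
\]
for some real $\alpha\neq 0$.

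The next step is to equate the coefficients of $x^{k}$ and of $x^{k-1}$ on the two sides. Solving the first equation for $\alpha$ and substituting into the second eliminates $\alpha$ and produces the single algebraic constraint
\[
3(k+1)(k-a)(\ell^k-1)^2 \;=\; 2k(k+1-a)(\ell^{k+1}-1)(\ell^{k-1}-1).
\]

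The crux of the argument is to refute this constraint. Here I would invoke the elementary identity
\[
(\ell^k-1)^2-(\ell^{k+1}-1)(\ell^{k-1}-1) \;=\; \ell^{k-1}(\ell-1)^2,
\]
which is strictly positive for $\ell\geq 2$, and use it to rewrite the constraint in the equivalent form
\[
\bigl[k^{2}+k-a(k+3)\bigr](\ell^{k+1}-1)(\ell^{k-1}-1) \;=\; -\,3(k+1)(k-a)\,\ell^{k-1}(\ell-1)^{2}.
\]
If $a=1$ (so $k$ even, $k\geq 2$) the bracket equals $k^2-3>0$, giving a strictly positive left side and a strictly negative right side---a contradiction. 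If $a=2$ (so $k$ odd, $k\geq 3$) the bracket factors as $(k-3)(k+2)$, which vanishes at $k=3$ (forcing a zero left side against a strictly negative right side) and is strictly positive for $k\geq 5$ (reproducing the sign clash). In every case we reach a contradiction, and hence $H$ has at least three distinct roots.

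The main obstacle is the case $k=3$: here the leading-order terms of the derived constraint cancel exactly, and only the correction $\ell^{k-1}(\ell-1)^2$ supplied by the identity above produces the contradiction. This is precisely the value of $k$ excluded from the main theorem (for independent reasons, namely that the resulting root-multiplicity pattern $\{2,1,1\}$ falls into Brindza's exceptional family), so carefully tracking the next-to-leading coefficient is essential.
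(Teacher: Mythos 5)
Your proof is correct and follows the same core strategy as the paper's: assume $H(x)=(\ell^{k+1}-1)\,x^{a}(x-\alpha)^{k+1-a}$ with $a\in\{1,2\}$, compare the coefficients of $x^{k}$ and $x^{k-1}$, and eliminate $\alpha$ to obtain a single constraint in $k$, $\ell$, $a$ --- your constraint $3(k+1)(k-a)(\ell^k-1)^2=2k(k+1-a)(\ell^{k+1}-1)(\ell^{k-1}-1)$ is exactly the paper's equation for $r$ in disguise. The difference is the endgame. The paper solves for the multiplicity $r=a$ of $0$, uses the inequality $(\ell^k-1)^2>(\ell^{k+1}-1)(\ell^{k-1}-1)$ to show $r$ lies strictly between $k-2$ and $k$, hence $r=k-1$; since $r\in\{1,2\}$ this forces $k\in\{2,3\}$, and each case is then eliminated by substituting back and reaching an explicit numerical contradiction such as $8(\ell^2+\ell+1)=9(\ell+1)^2$. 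You instead upgrade that inequality to the exact identity $(\ell^k-1)^2-(\ell^{k+1}-1)(\ell^{k-1}-1)=\ell^{k-1}(\ell-1)^2$ and rearrange the constraint into $\bigl[k^2+k-a(k+3)\bigr](\ell^{k+1}-1)(\ell^{k-1}-1)=-3(k+1)(k-a)\ell^{k-1}(\ell-1)^2$, whose two sides have incompatible signs for every admissible pair $(k,a)$: the bracket is $k^2-3>0$ for $a=1$, $k\ge2$, and $(k-3)(k+2)\ge0$ for $a=2$, $k\ge3$ odd, while the right side is strictly negative since $k-a>0$. This yields a uniform contradiction, handles the delicate cancellation at $k=3$ without a separate computation, and avoids the paper's case split; the paper's version, in exchange, makes visible that $r=k-1$ is the only multiplicity pattern surviving the first-order analysis. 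Both arguments are complete.
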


\begin{proof}
Let $0$ be a root of multiplicity $r=1,2$ of $H(x)$ and suppose that $H(x)$ has only two distinct  roots. Then
$$\frac{H(x)}{\ell^{k+1}-1} =x^r(x+\alpha)^{k+1-r}$$
for some $\alpha$. This means that
$$\alpha(k+1-r)=\frac{(k+1)(\ell^{k}-1)}{2(\ell^{k+1}-1)}, \qquad \alpha^2\binom{k+1-r}{2}=\frac{(k+1)k(\ell^{k-1}-1)}{12(\ell^{k+1}-1)}.$$
This implies that 
$$ \left(\frac{(k+1)(\ell^{k}-1)}{2(\ell^{k+1}-1)(k+1-r)}\right)^2\cdot \binom{k+1-r}{2}=\frac{(k+1)k(\ell^{k-1}-1)}{12(\ell^{k+1}-1)}$$
and therefore
\begin{equation}\label{Eq1}
 r=k\left(1-\frac{2(\ell^{k-1}-1)(\ell^{k+1}-1)}{3(k+1)(\ell^k-1)^2-2k(\ell^{k-1}-1)(\ell^{k+1}-1)}\right).
\end{equation}
Since $\ell\geq 2$, we obtain $(\ell+\frac{1}{\ell}){\ell}^k\geq(2+\frac{1}{\ell})\ell^k>2\ell^k$. Using
$(\ell+\frac{1}{\ell}){\ell}^k>2\ell^k$, we get $\ell^{k+1}+\ell^{k-1}>2\ell^k$. From here, we get $-2\ell^k+1+\ell^{2k}>-\ell^{k+1}-\ell^{k-1}+\ell^{2k}+1$, whence
\begin{equation}\label{Eq2}
(\ell^k-1)^2>(\ell^{k-1}-1)(\ell^{k+1}-1).
\end{equation}
Now we consider the expression
\begin{equation*}
\frac{3(k+1)(\ell^k-1)^2-2k(\ell^{k-1}-1)(\ell^{k+1}-1)}{2(\ell^{k-1}-1)(\ell^{k+1}-1)},
\end{equation*}
which equals
\begin{equation}\label{Eq3}
\frac{3(k+1)(\ell^k-1)^2}{2(\ell^{k-1}-1)(\ell^{k+1}-1)}-k.
\end{equation}
Using \eqref{Eq2} and \eqref{Eq3}, we find that
\begin{equation*}
\frac{3(k+1)}{2} \cdot \frac{(\ell^k-1)^2}{(\ell^{k-1}-1)(\ell^{k+1}-1)}>\frac{3(k+1)}{2},
\end{equation*}
hence
\begin{equation*}
\frac{3(k+1)(\ell^k-1)^2}{2(\ell^{k-1}-1)(\ell^{k+1}-1)}-k>\frac{3(k+1)}{2}-k=\frac{(k+3)}{2}.
\end{equation*}
Thus we obtain
$$\frac{2(\ell^{k-1}-1)(\ell^{k+1}-1)}{3(k+1)(\ell^k-1)^2-2k(\ell^{k-1}-1)(\ell^{k+1}-1)}<\frac{2}{k+3}.$$

By \eqref{Eq1}, $r\in ]k-2,k[$, that is, the only possibility is $r=k-1$. 
If $r=1$, then $k=2$ and, by \eqref{Eq1},
$$1=2\left(1-\frac{2(\ell -1)(\ell^{3}-1)}{9(\ell^2-1)^2-4(\ell-1)(\ell^{3}-1)}\right),$$
which yields $8(\ell^2+\ell+1)=9(\ell+1)^2,$ a contradiction. 
If $r=2$, then $k=3$ and, by \eqref{Eq1},
$$2=3\left(1-\frac{2(\ell^2 -1)(\ell^{4}-1)}{12(\ell^3-1)^2-6(\ell^2-1)(\ell^{4}-1)}\right),$$
therefore  $(\ell+1)(\ell^2+1)=(\ell^2+\ell+1)^2$ which is a contradiction. 

Thus there are at least three distinct roots.
\end{proof}

In the following, we want to apply Theorem \ref{theo.2}. In particular, we want to establish sufficient conditions to avoid both patterns $(a)$ and $(b)$. 

\begin{theorem}[Main theorem]\label{maintheo}
Let $k,\ell$ be fixed integers such that $k\ge2$, $k\neq3$, $\ell\geq2$. Then all solutions of equation \eqref{xx} in integers $x,y,n$ with $x,y\geq1$, $n\geq2$ satisfy $\max\{x,y,n\}<C$ where $C$ is an effectively computable constant depending only on $\ell$ and $k$.
\end{theorem}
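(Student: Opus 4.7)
The strategy is to combine Theorems \ref{theo.1} and \ref{theo.2}. By Proposition \ref{NumberOfRoots}, the polynomial $H(x)$ has at least three distinct roots, hence at least two, so Theorem \ref{theo.1} yields an effective constant $N_0 = N_0(k,\ell)$ such that every solution of \eqref{xx} must satisfy $n \leq N_0$. It therefore suffices to bound $\max\{x,y\}$ effectively for each fixed $n \in \{2,\ldots,N_0\}$.

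For each such $n$, I would apply Brindza's Theorem \ref{theo.2} with $b=1$ to the equation $H(x)=y^n$. Writing
\[
H(x) = (\ell^{k+1}-1)\,x^{r}\prod_{i=1}^{m-1}(x-\alpha_i)^{s_i},
\]
with $r\in\{1,2\}$ (by the comment preceding Proposition \ref{NumberOfRoots}) and $m\geq 3$ (by Proposition \ref{NumberOfRoots}), and setting $t_0 = n/\gcd(n,r)$ and $t_i = n/\gcd(n,s_i)$, Brindza's theorem returns an effective $C_n=C_n(k,\ell)$ bounding $\max\{x,y\}$, provided the multiset $(t_0,\ldots,t_{m-1})$ avoids the forbidden patterns $(a)$ and $(b)$.

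The main work is therefore the pattern exclusion. Pattern $(a)$ -- all but one of the $t_i$'s equal to $1$ -- is equivalent to a factorization $H(x) = c\,(x-\alpha)^{r^{\ast}}Q(x)^n$ in $\overline{\mathbb{Q}}[x]$ for some $\alpha$, $r^{\ast}$ and $Q$. A short case analysis narrows the possibilities: if the exceptional index is $0$ then $n\mid s_i$ for every $i\geq 1$, forcing $k+1-r\geq(m-1)n\geq 2n$, hence $n\leq k/2$; if the exceptional index lies in $\{1,\ldots,m-1\}$, then $n\mid r\in\{1,2\}$ forces $(n,r)=(2,2)$, i.e.\ $n=2$ with $k$ odd. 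In each surviving subcase I would compare the three leading coefficients of the hypothetical factorization with those of $H(x)$ read from the Bernoulli expansion preceding Proposition \ref{NumberOfRoots}; this produces an algebraic identity in $k,\ell$ of the same flavour as \eqref{Eq1}, which admits only finitely many, effectively computable, exceptions. Pattern $(b)$ is excluded by the same method with two exceptional roots. The hypothesis $k\neq 3$ is essential here: for $k=3$ the multiplicities of $H$ are $\{2,1,1\}$, so at $n=2$ they fall into pattern $(b)$ and Brindza cannot be applied.

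Once both patterns are excluded, Theorem \ref{theo.2} produces the bound $C_n$ for each $n \leq N_0$, and setting $C := \max\bigl(N_0,\ \max_{2\leq n\leq N_0}C_n\bigr)$ gives the required constant. The chief obstacle is precisely the coefficient-matching step in the pattern exclusion: verifying uniformly across all admissible $n$ and both parities of $k$ that the Bernoulli-prescribed top coefficients of $H$ are incompatible with any factorization of the required special shape.
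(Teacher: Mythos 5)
Your skeleton (Schinzel--Tijdeman to bound $n$, then Brindza for each fixed $n$, with the real work being the exclusion of patterns $(a)$ and $(b)$) matches the architecture of the paper's proof, and your initial reductions are sound: the dangerous cases are indeed those where every root other than $0$ has multiplicity divisible by $n$ (possibly with one extra root of multiplicity $\equiv 2 \pmod 4$ when $n=4$), so that $H(x)/x^{r}=(\ell^{k+1}-1)f(x)^{n}$ for some $f\in\mathbb{Q}[x]$. But the step you defer --- ``compare the three leading coefficients of the hypothetical factorization \dots this produces an algebraic identity in $k,\ell$ \dots which admits only finitely many, effectively computable, exceptions'' --- is precisely the crux, and the method you propose for it does not work. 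First, in the bad configuration $f$ has degree $s=k/n$ (or $(k-1)/n$) and hence $s$ undetermined coefficients; matching three leading coefficients of $H$ gives three equations in $s$ unknowns and yields no contradiction once $s\ge 3$. (The coefficient-matching in Proposition \ref{NumberOfRoots} succeeds only because there the cofactor is a power of a \emph{linear} polynomial, i.e.\ a single unknown.) Second, even if such a comparison produced an identity with ``only finitely many exceptions'' in $(k,\ell)$, that would not prove the theorem, which is asserted for \emph{every} $k\ge 2$, $k\neq 3$, $\ell\ge 2$.

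The paper's actual mechanism is different and is worth internalizing: it looks at the \emph{lowest}-order nonzero coefficient of $H$, i.e.\ the constant term of $H(x)/x^{r}$, which under the bad factorization must equal $(\ell^{k+1}-1)\alpha_0^{n}$ with $\alpha_0\in\mathbb{Q}$. Bernoulli identities give this coefficient explicitly as $(k+1)(\ell-1)B_k$ when $r=1$ and $\binom{k+1}{2}(\ell^2-1)B_{k-1}$ when $r=2$, and the von Staudt--Clausen theorem shows that the denominator of the resulting rational number $\alpha_0^{n}$ is divisible by $2$ but not by $4$ (after a $2$-adic computation on $(\ell^2-1)/(\ell^{k+1}-1)$ in the $r=2$ case, valid for $\ell$ odd). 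No rational $n$-th power with $n\ge 2$ can have such a denominator, and this kills the pattern for all $k,\ell$ simultaneously rather than up to finitely many exceptions. Note also that this argument genuinely requires $\ell$ odd; the paper handles $\ell$ even by citation to \cite{S}, a case split your proposal omits entirely. Your observation about $k=3$ (multiplicities $\{2,1,1\}$, falling into pattern $(b)$ at $n=2$) is correct.
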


\section{Proof of Theorem \ref{maintheo}}
\begin{proof}
	
We distinguish the cases $k+1\, odd$ and $k+1\, even$.\\
 
\textbf{Case 1:} We suppose that $k+1$ is odd and then the multiplicity of the root $0$ is $r=1$. Then $t_0=\frac{n}{(n,1)}=n$. 

Also, using that $\sum\limits_{i=0}^{k-1}\binom {k} {i}B_{i}=0$ (see \cite[formulas (4.2) and (4.3)]{Rd}), the term of degree of $1$ of $H(x)$ is
$$(\ell-1)\sum_{i=0}^{k}\binom{k+1}{i}(k+1-i)B_i =(k+1)(\ell-1)\sum_{i=0}^{k}\binom{k}{i}B_i=(k+1)(\ell-1)B_k\neq 0,$$
where $B_i$ is $i$-th Bernoulli number.
\hfill

\begin{enumerate}
\item [(i)] Suppose $n\nmid k$.

Since $k$ is even and $n\nmid k$, the case $n=2$ is impossible. Therefore $n>2$ since $k$ is even and then there exists at least one root distinct from $0$ such that $n\nmid r_i$, where $r_i$ is its multiplicity. This yields $t_i=\frac{n}{(n,r_i)}\neq 1$ and therefore the bad patterns in Theorem \ref{theo.2} are avoided.

\hfill

\item [(ii)]  Suppose  $n\mid k$.  

If all the roots of the polynomial $H(x)$ have multiplicity $r_{i}$ divisible by $n$, then $H(x)/x=(\ell^{k+1}-1)f(x)^n$, where $f(x)=x^s+\sum_{i=0}^{s-1}\alpha_i x^i$, with $k=ns$. Since all coefficients of $H(x)/(x(\ell^{k+1}-1))$ are rational, $f(x)$ also must have rational coefficients. So the term $\alpha_0$ is rational and $\alpha_0^n=(k+1)(\ell-1)B_k/(\ell^{k+1}-1)$. According to the von Staudt-Clausen theorem, if $B_k\neq 0$ then $2$ divides the denominator but $4$ does not divide. In this case, if $2^a$ is the highest power that divides $\ell-1$, then $2^a$ is the highest power which also divides $\ell^{k+1}-1$. Therefore $2$ divides and $4$ does not divide the denominator of $\alpha_0^n$ which is a contradiction.

If there exists at least one root having  multiplicity $r_{i}$ not divisible by $n$, then the pattern does not correspond to $(n,1,1,1,1â\ldots)$. So this case is completed.
\end{enumerate}

\hfill

\textbf{Case 2:} Now suppose that $k+1$ is even and then the multiplicity of the root $0$ is $r=2$. Then $t_0=\frac{n}{(n,2)}\in \{n/2,n\}$. Also,
$B_{k-1}\neq 0$ and the term of degree $2$ in $H(x)$ is given by 
$$(\ell^2-1)\sum_{i=0}^{k-1}\binom{k+1}{i}\binom{k+1-i}{2}B_i =\binom{k+1}{2}(\ell^2-1)\sum_{i=0}^{k-1}\binom{k-1}{i}B_i=\binom{k+1}{2}(\ell^2-1)B_{k-1}\neq 0.$$

\hfill

\begin{enumerate}
\item[(i)] Suppose  $n\mid (k-1)$. 

If there exists at least one root having  multiplicity $r_{i}$ not divisible by $n$, then the pattern does not correspond to $(n,1,1,1,1,\ldots)$. 

If all the roots of the polynomial $H(x)$ have multiplicity $r_{i}$ divisible by $n$, then  $H(x)/(x^2(\ell^{k+1}-1))$ must be a monic polynomial which is also an $n$-power, then $H(x)/x^2=(\ell^{k+1}-1)f(x)^n$, where $f\in \mathbb{Q}[x]$. 
By the von Staudt-Clausen theorem again, a prime $p$ divides the denominator of $B_{k-1}$ if and only if $(p-1) \mid (k-1)$ and the denominator is square-free. Suppose that $2^e\mid \mid (k+1)/2$, that is $2^e\mid (k+1)/2$ and $2^{e+1}\nmid (k+1)/2$. Then
$$\frac{k+1}{2}\equiv 2^{e} \pmod {2^{e+1}}.$$

Now assume that $\ell$ is odd. Then 
$$\ell^2=1+8t \pmod {2^{e+1}},$$
therefore 
$$\frac{\ell^2-1}{\ell^{k+1}-1}=\frac{1}{\ell^{k-1}+\ell^{k-3}+\ell^{k-5}+\cdots+\ell^2+1}=\frac{1}{z},$$
where 
\begin{eqnarray*}
z&\equiv& 1+(1+8t)+(1+8t)^2+(1+8t)^3+\cdots+(1+8t)^{(k-1)/2}\\
&\equiv&  \frac{(1+8t)^{(k+1)/2}-1}{8t}\pmod {2^{e+1}} \equiv \frac{\frac{k+1}{2}8t+\frac{k^2-1}{8}(8t)^2+\cdots}{8t}\pmod {2^{e+1}}\\
&\equiv& \frac{k+1}{2}+\frac{k^2-1}{8}8t+\cdots \pmod {2^{e+1}}\equiv \frac{k+1}{2}\pmod {2^{e+1}}\equiv 2^{e} \pmod {2^{e+1}}.
\end{eqnarray*}
Thus $2^{e}\mid \mid z$ and then $2$ is the highest power of $2$ dividing the denominator of 
$$ \binom{k+1}{2}\frac{\ell^2-1}{\ell^{k+1}-1}B_{k-1}.$$ 
This is not possible since 
$$\alpha_0^n=\binom{k+1}{2}\frac{\ell^2-1}{\ell^{k+1}-1}B_{k-1}.$$

\hfill

Since the case when $\ell$ is even for the equation \eqref{xx} has already been considered in \cite{S}, the proof of case $(i)$ is completed.

\item[(ii)] Suppose  $n\nmid (k-1)$. Then $n$ must be at least $3$, since $k-1$ is even. 

\begin{itemize}
\item If $n=3$, then $t_0=\frac{n}{(n,2)}=3$ and there exists at least one root distinct from $0$ such that $n\nmid r_i$, where $r_i$ is its multiplicity. This yields $t_i=\frac{n}{(n,r_i)}\neq 1$ and therefore the bad patterns are avoided. 

\item If $n=4$, then $t_0=\frac{n}{(n,2)}=2$. Since $n\nmid k-1$, it can be still possible that there exists a unique root of multiplicity $r_i$, not divisible by $4$, but divisible by $2$, and all the other multiplicities are divisible by $4$. So we can write $H(x)/x^2=(\ell^{k+1}-1)f(x)^2$ where $f\in \mathbb{Q}[x]$, since $H(x)$ has, apart from $0$, one root of multiplicity $2$, and all the other multiplicities are divisible by $4$. 

Here we distinguish two cases. First we suppose that $\ell$ is odd. Then, following the steps in Case 2 $(i)$, we get that $2$ is the highest power of $2$ dividing the denominator of
$$ \binom{k+1}{2}\frac{\ell^2-1}{\ell^{k+1}-1}B_{k-1},$$ 
which contradicts with
$$\alpha_0^2=\binom{k+1}{2}\frac{\ell^2-1}{\ell^{k+1}-1}B_{k-1}.$$  
The case when $\ell$ is even has been considered in \cite{S}. So the proof of the case $(ii)$ with $n=4$ is completed.

\item If $n>4$, then $t_0=\frac{n}{(n,2)}>2$, and there exists at least one root distinct from $0$ such that $n\nmid r_i$, where $r_i$ is its multiplicity. This yields $t_i=\frac{n}{(n,r_i)}\neq 1$ and therefore the bad patterns are avoided. This finishes the proof of the theorem.
\end{itemize}
\end{enumerate}
\end{proof}
\section*{Acknowledgments}
We would like to thank to the referees for carefully reading our paper and
for giving such constructive comments which substantially helped improving
the quality of the paper. This work was started when the second author visited the Department of Mathematics and Informatics, University of Perugia, Perugia-Italy in an Erasmus Staff Exchange visit. He would like to thank to Professors Daniele Bartoli and Rita Vincenti for their kind hospitality. 
\bibliographystyle{abbrv}

\end{document}